\newtheorem{theorem}{Theorem}
\theoremstyle{plain}
\newtheorem{definition}{Definition}
\newtheorem{lemma}{Lemma}
\newtheorem{proposition}{Proposition}
\numberwithin{equation}{section}
\begin{document}
\title{On the Applications of Cyclotomic Fields in Introductory Number Theory%
}
\author{Kabalan Gaspard}
\date{June 22, 2011, re-edited February 11, 2012}
\maketitle

\begin{abstract}
In this essay, we see how prime cyclotomic fields (cyclotomic fields
obtained by adjoining a primitive $p$-th root of unity to $%
%TCIMACRO{\U{211a} }%
%BeginExpansion
\mathbb{Q}
%EndExpansion
$, where $p$ is an odd prime) can lead to elegant proofs of number
theoretical concepts. We namely develop the notion of primary units in a
cyclotomic field, demonstrate their equivalence to real units in this case,
and show how this leads to a proof of a special case of Fermat's Last
Theorem. We finally modernize Dirichlet's solution to Pell's Equation.
\end{abstract}

\bigskip Throughout this paper, unless specified otherwise, $\zeta \equiv
\zeta _{p}\equiv e^{\frac{2\pi \sqrt{-1}}{p}}$ where $p$ is an odd prime. $%
K\equiv 
%TCIMACRO{\U{211a} }%
%BeginExpansion
\mathbb{Q}
%EndExpansion
(\zeta )$ and $\mathcal{O}_{K}$ is the ring of integers of $K$. We assume
knowledge of the basic properties of prime cyclotomic fields that can be
found in any introductory algebraic number theory textbook, namely that:

\begin{itemize}
\item $Gal(K:%
%TCIMACRO{\U{211a} }%
%BeginExpansion
\mathbb{Q}
%EndExpansion
)\simeq U(%
%TCIMACRO{\U{2124} }%
%BeginExpansion
\mathbb{Z}
%EndExpansion
/p%
%TCIMACRO{\U{2124} }%
%BeginExpansion
\mathbb{Z}
%EndExpansion
)$ (the group of units of $%
%TCIMACRO{\U{2124} }%
%BeginExpansion
\mathbb{Z}
%EndExpansion
/p%
%TCIMACRO{\U{2124} }%
%BeginExpansion
\mathbb{Z}
%EndExpansion
$), which is cyclic and of order $p-1$.

\item $\mathcal{O}_{K}=%
%TCIMACRO{\U{2124} }%
%BeginExpansion
\mathbb{Z}
%EndExpansion
\lbrack \zeta _{p}]=\left\langle 1,\zeta _{p},...,\zeta
_{p}^{p-2}\right\rangle _{%
%TCIMACRO{\U{2124} }%
%BeginExpansion
\mathbb{Z}
%EndExpansion
}$, where $\left\{ 1,\zeta _{p},...,\zeta _{p}^{p-2}\right\} $ is a $%
%TCIMACRO{\U{2124} }%
%BeginExpansion
\mathbb{Z}
%EndExpansion
$-basis for $\mathcal{O}_{K}$.

\item The only roots of unity in $\mathcal{O}_{K}$ (i.e. solutions in $%
%TCIMACRO{\U{2102} }%
%BeginExpansion
\mathbb{C}
%EndExpansion
$ to $x^{n}=1$ for some $n\in 
%TCIMACRO{\U{2115} }%
%BeginExpansion
\mathbb{N}
%EndExpansion
$) are of the form $\pm \zeta _{p}^{i}$, $i\in 
%TCIMACRO{\U{2124} }%
%BeginExpansion
\mathbb{Z}
%EndExpansion
$.
\end{itemize}

We also assume elementary knowledge of quadratic characters, quadratic
reciprocity, and the Legendre symbol $\left( \dfrac{k}{p}\right) $.

\bigskip

\section{Primary elements in $\mathcal{O}_{K}$}

\begin{definition}
\label{PrimaryDef}Let $\alpha \in \mathcal{O}_{K}$ with $\alpha $ prime to $p
$. Then $\alpha $ is \emph{primary} iff $\alpha $ is congruent to a rational
integer modulo $(1-\zeta _{p})^{2}$.
\end{definition}

The definition of primary elements has historically been ambiguous in Number
Theory. In \cite{Dalawat}, Dalawat shows that definitions of primary
elements in $\mathcal{O}_{K}$ even differ by country ("$p$-primary", "%
\textit{primaire}" and "\textit{prim\"{a}r}") and, even though these
definitions do form a chain of implications, they are not equivalent.

We also note that it is not true that if $p$ an arbitrary odd prime and $\mu 
$ prime in $\mathcal{O}_{K}$, only one associate of $\mu $ is primary (for
example, according to the above definition, both $\pm (4+3\omega )$ are
primary in the ring of integers of $%
%TCIMACRO{\U{211a} }%
%BeginExpansion
\mathbb{Q}
%EndExpansion
(\omega )$ where $\omega =e^{\frac{2\pi \sqrt{-1}}{3}}$).

\begin{proposition}
\label{Only1PrimaryGeneral}Let $\alpha $ $\in \mathcal{O}_{K}$ (not
necessarily prime) and suppose $\alpha $ prime to $p$ in $\mathcal{O}_{K}$.
Then there exists a $k\in 
%TCIMACRO{\U{2124} }%
%BeginExpansion
\mathbb{Z}
%EndExpansion
$, unique (modulo $p$), such that $\zeta _{p}^{k}\alpha $ is primary.
\end{proposition}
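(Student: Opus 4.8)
The plan is to work modulo the prime $\lambda \equiv 1-\zeta_p$ lying over $p$ and to reduce the whole statement to a single linear equation over $\mathbb{F}_p$. First I would record the standard ramification fact $(p)=(\lambda)^{p-1}$ together with $\mathcal{O}_K/(\lambda)\cong\mathbb{F}_p$; since $p$ is odd we have $p-1\geq 2$, so $p\in(\lambda^2)$, and more precisely I would check that $(\lambda^2)\cap\mathbb{Z}=p\mathbb{Z}$ by comparing $\lambda$-adic valuations (a rational integer $m$ satisfies $v_\lambda(m)=(p-1)v_p(m)\geq 2$ iff $p\mid m$). This identifies the image of $\mathbb{Z}$ in $R:=\mathcal{O}_K/(\lambda^2)$ with $\mathbb{F}_p$, so that ``primary'' becomes exactly the condition that the class of $\zeta_p^k\alpha$ in $R$ lies in this copy of $\mathbb{F}_p$. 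Because $(\lambda)/(\lambda^2)$ is one-dimensional over $\mathcal{O}_K/(\lambda)\cong\mathbb{F}_p$, every class in $R$ is uniquely of the form $\bar a+\bar b\,\bar\lambda$ with $\bar a,\bar b\in\mathbb{F}_p$ and $\bar\lambda^2=0$.

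With this normal form in hand the computation is short. Since $\alpha$ is prime to $p$ it is prime to $\lambda$, hence a unit in $R$; reducing modulo $\lambda$ gives $\alpha\equiv a\pmod{\lambda}$ with $a\in\{1,\dots,p-1\}$, so I may write $\alpha=a+\beta\lambda$ with $\bar a\neq 0$ in $\mathbb{F}_p$. Using $\zeta_p=1-\lambda$ and the binomial expansion, $\zeta_p^k\equiv 1-k\lambda\pmod{\lambda^2}$, whence
\[
\zeta_p^k\alpha \equiv (1-k\lambda)(a+\beta\lambda) \equiv a+(\beta-ka)\lambda \pmod{\lambda^2}.
\]
Thus $\zeta_p^k\alpha$ is primary precisely when the coefficient of $\bar\lambda$ vanishes in $\mathbb{F}_p$, i.e. when $\bar\beta-k\bar a=0$, where $\bar\beta$ is the class of $\beta$ modulo $\lambda$. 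Since $\bar a\neq 0$, this is the single linear condition $k\equiv\bar\beta\,\bar a^{-1}\pmod p$, which has a unique solution modulo $p$, giving simultaneously existence and the claimed uniqueness of $k$.

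The main thing to get right is the bookkeeping that makes this a genuinely one-dimensional problem: I must verify that ``congruent to a rational integer modulo $\lambda^2$'' is equivalent to the $\bar\lambda$-coefficient being zero, which rests precisely on $(\lambda^2)\cap\mathbb{Z}=p\mathbb{Z}$ and on $(\lambda)/(\lambda^2)$ being a one-dimensional $\mathbb{F}_p$-space; without these the uniqueness could come out coarser or finer than modulo $p$. I would also emphasize that $k$ is determined only modulo $p$ rather than as an integer, matching the statement, because $\zeta_p^k$ depends only on $k\bmod p$. The one small point to confirm is that the rational integer $ka$ reduces modulo $\lambda$ to $k\bar a$ as expected, which is immediate from $\mathcal{O}_K/(\lambda)\cong\mathbb{F}_p$ carrying a rational integer to its residue modulo $p$.
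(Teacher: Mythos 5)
Your proof is correct and takes essentially the same route as the paper's: both write $\alpha \equiv a + b\lambda \pmod{\lambda^{2}}$, expand $\zeta_{p}^{k} \equiv 1 - k\lambda \pmod{\lambda^{2}}$ via the binomial theorem, and reduce the primary condition to the linear congruence $b - ka \equiv 0 \pmod{p}$, which has a unique solution because $a \not\equiv 0 \pmod{p}$ when $\alpha$ is prime to $p$. The only difference is bookkeeping: you justify the normal form and the identification of uniqueness mod $p$ through the quotient ring $\mathcal{O}_{K}/(\lambda^{2})$ and $\lambda$-adic valuations, whereas the paper gets the same facts by applying Dedekind's theorem twice and by a norm computation showing $(1-\zeta_{p})^{2} \mid p$.
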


\begin{proof}
Consider the ideal $P=(1-\zeta _{p})$ in $\mathcal{O}_{K}$. Then the norm of
the ideal $N(P)=\prod\limits_{i=1}^{p-1}(1-\zeta _{p}^{i})=p$ by the fact
that $Gal(K:%
%TCIMACRO{\U{211a} }%
%BeginExpansion
\mathbb{Q}
%EndExpansion
)\simeq U(%
%TCIMACRO{\U{2124} }%
%BeginExpansion
\mathbb{Z}
%EndExpansion
/p%
%TCIMACRO{\U{2124} }%
%BeginExpansion
\mathbb{Z}
%EndExpansion
)$. So $P$ is a prime ideal and is thus of degree $1$. So by Dedekind's
Theorem in Algebraic Number Theory, any element of $\mathcal{O}_{K}$ is the
root of a monic polynomial of degree $1$ in $\mathcal{O}_{K}/P$. So in the
particular case of $\alpha $, $\alpha -a_{0}=\overline{0}$ in $\mathcal{O}%
_{K}/P$ for some $a_{0}\in 
%TCIMACRO{\U{2124} }%
%BeginExpansion
\mathbb{Z}
%EndExpansion
$. In other words, $\alpha \equiv a_{0}$ $(1-\zeta _{p})$. So $\frac{\alpha
-a_{0}}{(1-\zeta _{p})}\in \mathcal{O}_{K}$ and so, by the same argument, $%
\frac{\alpha -a_{0}}{(1-\zeta _{p})}\equiv a_{1}$ $(1-\zeta _{p})$ for some $%
a_{1}\in 
%TCIMACRO{\U{2124} }%
%BeginExpansion
\mathbb{Z}
%EndExpansion
$. We stop repeating this here because multiplying the congruence by $%
(1-\zeta _{p})$, we now have a congruence modulo $(1-\zeta _{p})^{2}$, which
is what we want to consider. More precisely, we now have $\alpha
-a_{0}\equiv a_{1}(1-\zeta _{p})$ $\ (1-\zeta _{p})^{2}$, so $\alpha \equiv
a_{0}+a_{1}(1-\zeta _{p})$ $\ (1-\zeta _{p})^{2}$.

We want to eliminate the $(1-\zeta _{p})$ term by multiplying both sides by $%
\zeta _{p}^{n}$ for some $n\in 
%TCIMACRO{\U{2124} }%
%BeginExpansion
\mathbb{Z}
%EndExpansion
$. Notice that $\zeta _{p}=(1-(1-\zeta _{p}))$. So modulo $(1-\zeta _{p})^{2}
$,%
\begin{eqnarray}
\zeta _{p}^{n}\alpha  &\equiv &\zeta _{p}^{n}a_{0}+a_{1}\zeta
_{p}^{n}(1-\zeta _{p})  \notag \\
&\equiv &a_{0}(1-(1-\zeta _{p}))^{n}+a_{1}(1-\zeta _{p})(1-(1-\zeta
_{p}))^{n}  \notag \\
&\equiv &a_{0}(1-n(1-\zeta _{p}))+a_{1}(1-\zeta _{p})(1-n(1-\zeta _{p}))%
\text{ }  \notag \\
&&  \notag
\end{eqnarray}

since considering $(1-(1-\zeta _{p}))^{n}$ as a polynomial in $(1-\zeta _{p})
$, $(1-\zeta _{p})^{2}$ divides $(1-\zeta _{p})^{i}$ for $i\geq 2$. So%
\begin{equation*}
\zeta _{p}^{n}\alpha \equiv a_{0}+(a_{1}-na_{0})(1-\zeta _{p})\text{ \ \ }%
(1-\zeta _{p})^{2}
\end{equation*}

Now $\alpha $ prime to $p$, so if $a_{0}\equiv 0$ $(p)$, then $a_{0}\equiv 0$
$(1-\zeta _{p})$, and so $\alpha \equiv 0$ $(1-\zeta _{p})$, which is a
contradiction. So $a_{0}\not\equiv 0$ $(p)$, and so $a_{1}-na_{0}\equiv 0$
has a \textbf{unique} solution $k$ modulo $p$. Now $(1-\zeta _{p})\mid
(1-\zeta _{p}^{2})$, and $N(\frac{1-\zeta _{p}^{2}}{1-\zeta _{p}})=\frac{%
N(1-\zeta _{p}^{2})}{N(1-\zeta _{p})}=1$, so $(1-\zeta _{p}^{2})$ is
associate to $(1-\zeta _{p})$. It follows that $(1-\zeta _{p})^{2}\mid p$,
and so $k$ is (still, since $a_{1}-na_{0}\in 
%TCIMACRO{\U{2124} }%
%BeginExpansion
\mathbb{Z}
%EndExpansion
$) the \textbf{unique} integral solution modulo $p$ to $a_{1}-na_{0}\equiv 0$
\ $(1-\zeta _{p})^{2}$. Then $\zeta _{p}^{k}\alpha \equiv a_{0}$ $(1-\zeta
_{p})^{2}$, and therefore $\zeta _{p}^{k}\alpha $ is primary.
\end{proof}

\begin{lemma}
\label{ratioUnit}Let $u$ be a unit in $\mathcal{O}_{K}$. Then $\frac{u}{%
\overline{u}}=\zeta ^{t}$ for some $t\in 
%TCIMACRO{\U{2124} }%
%BeginExpansion
\mathbb{Z}
%EndExpansion
$
\end{lemma}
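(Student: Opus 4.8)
The plan is to set $\beta = u/\overline{u}$ and show that $\beta$ is a root of unity lying in $\mathcal{O}_{K}$; the assumed classification of the roots of unity then forces $\beta = \pm \zeta^{t}$, and a congruence argument eliminates the sign. First I would note that complex conjugation is the Galois automorphism $\sigma_{-1}\colon \zeta \mapsto \zeta^{-1}$, so $\overline{u}\in \mathcal{O}_{K}$ and, being the image of a unit under a ring automorphism, is itself a unit. Hence $\beta = u/\overline{u}\in \mathcal{O}_{K}$ is a unit, and in particular an algebraic integer.

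The key step is to show that every $\mathbb{Q}$-conjugate of $\beta$ has absolute value $1$. Since $K/\mathbb{Q}$ is Galois, these conjugates are exactly the $\sigma(\beta)$ for $\sigma \in Gal(K:\mathbb{Q})$. Because this group is abelian, conjugation commutes with every $\sigma$, so $\sigma(\overline{u}) = \overline{\sigma(u)}$ and thus $\sigma(\beta) = \sigma(u)/\overline{\sigma(u)}$, which has modulus $1$. By Kronecker's theorem (a nonzero algebraic integer all of whose conjugates lie on the unit circle is a root of unity), $\beta$ is a root of unity in $\mathcal{O}_{K}$, and so by the classification of such roots of unity assumed at the start of the paper, $\beta = \pm \zeta^{t}$ for some $t \in \mathbb{Z}$.

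It remains to rule out the minus sign, which I expect to be the one genuinely delicate point. I would reduce modulo the prime $P = (1-\zeta)$, whose residue field is $\mathbb{F}_{p}$. Since conjugation fixes $P$ (the unique prime above $p$) and necessarily induces the identity on $\mathbb{F}_{p}$, we get $\overline{u}\equiv u \pmod{P}$, whence $\beta \equiv 1 \pmod{P}$. On the other hand $\zeta \equiv 1 \pmod{P}$, so if $\beta = -\zeta^{t}$ then $-1 \equiv 1 \pmod{P}$, i.e. $2 \in P$; but $N(P) = p$ is odd, so $2$ is a unit modulo $P$, a contradiction. Therefore the sign is $+$ and $\beta = \zeta^{t}$, as claimed.

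The only real obstacle is this sign elimination; everything preceding it is a routine application of Kronecker's theorem together with the given description of the roots of unity in $\mathcal{O}_{K}$.
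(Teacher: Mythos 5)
Your proposal is correct and follows essentially the same route as the paper: both show that every conjugate of $u/\overline{u}$ has modulus $1$, conclude it is a root of unity in $\mathcal{O}_{K}$ and hence of the form $\pm\zeta^{t}$, and then eliminate the sign $-\zeta^{t}$ by a congruence modulo $(1-\zeta)$, using that $p$ is odd. The only differences are presentational: the paper proves Kronecker's theorem inline (via the bounded-symmetric-function argument) where you cite it as a known result, and it derives $u\equiv\overline{u} \pmod{1-\zeta}$ by explicit computation in the basis $\{1,\zeta,\dots,\zeta^{p-2}\}$ rather than via the induced automorphism of the residue field $\mathbb{F}_{p}$.
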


\begin{proof}
Write $\upsilon =\frac{u}{\overline{u}}$. Conjugation is a Galois
automorphism on $\mathcal{O}_{K}$ since $\overline{\zeta }=\zeta ^{-1}=\zeta
^{p-1}$. So $\overline{u}$ is also a unit, and so $\upsilon \in \mathcal{O}%
_{K}$. Now let $\sigma _{k}$ be the $(p-1)$ Galois automorphisms on $%
\mathcal{O}_{K}$ such that $\sigma _{k}(\zeta )=\zeta ^{k}$, $k\in 
%TCIMACRO{\U{2124} }%
%BeginExpansion
\mathbb{Z}
%EndExpansion
$. Then for all $1\leq k\leq (p-1)$, $\sigma _{k}\upsilon =\frac{\sigma _{k}u%
}{\sigma _{k}\overline{u}}=$ $\frac{\sigma _{k}u}{\overline{\sigma _{k}u}}$
by the above remark. So $\left\vert \sigma _{k}\upsilon \right\vert =\sigma
_{k}\upsilon \overline{\sigma _{k}\upsilon }=1$. So $\left\vert \sigma
_{k}\upsilon \right\vert ^{n}=1$ for any\thinspace $n\in 
%TCIMACRO{\U{2115} }%
%BeginExpansion
\mathbb{N}
%EndExpansion
$.

Now consider the polynomial $f(x)=\prod\limits_{k=1}^{p-1}(x-\sigma
_{k}\upsilon )$. The coefficients of this polynomial are elementary
symmetric polynomials in $\{\sigma _{k}\upsilon :1\leq k\leq p-1\}$, and so
are invariant by action by $Gal(K:Q)=\{\sigma _{k}\upsilon :1\leq k\leq
p-1\} $. So $f(x)\in 
%TCIMACRO{\U{2124} }%
%BeginExpansion
\mathbb{Z}
%EndExpansion
\lbrack x]$. But then the coefficient of $x^{k}$ is $s_{(p-1)-k}$ where $%
s_{j}$ is the $j^{th}$ elementary symmetric polynomial. But by the previous
paragraph, $\left\vert s_{(p-1)-k}\right\vert \leq
\sum\limits_{j=1}^{p-1-k}\left\vert \sigma _{k}\upsilon \right\vert ^{k}\leq
p-1-k$. So there are finitely many possible such $f(x)\in 
%TCIMACRO{\U{2124} }%
%BeginExpansion
\mathbb{Z}
%EndExpansion
\lbrack x]$ since the coefficients are bounded. So there are finitely many
possible roots since a polynomial of finite degree has a finite number of
roots. But $\left\vert \sigma _{k}\upsilon ^{n}\right\vert =1$ for
any\thinspace $n\in 
%TCIMACRO{\U{2115} }%
%BeginExpansion
\mathbb{N}
%EndExpansion
$, so $\{\upsilon ^{n}:n\in 
%TCIMACRO{\U{2115} }%
%BeginExpansion
\mathbb{N}
%EndExpansion
\}$ satisfy the same argument. So we must have $\upsilon ^{n}=\upsilon
^{n^{\prime }}$ for some $n,n^{\prime }\in 
%TCIMACRO{\U{2124} }%
%BeginExpansion
\mathbb{Z}
%EndExpansion
$. So $\upsilon ^{n-n^{\prime }}=1$, and it follows that $\upsilon $ is a
root of unity in $\mathcal{O}_{K}$.

So by the basic properties of prime cyclotomic fields, we must have $%
\upsilon =\pm \zeta ^{t}$ for some $t\in 
%TCIMACRO{\U{2124} }%
%BeginExpansion
\mathbb{Z}
%EndExpansion
$. Now consider congruence modulo $\lambda =1-\zeta $. Then since $\dfrac{%
1-\zeta ^{k}}{1-\zeta }=\sum\limits_{i=1}^{k-1}\zeta ^{i}\in \left\langle
1,\zeta _{p},...,\zeta _{p}^{p-2}\right\rangle _{%
%TCIMACRO{\U{2124} }%
%BeginExpansion
\mathbb{Z}
%EndExpansion
}=\mathcal{O}_{K}$, $\zeta ^{k}\equiv 1$ $(\lambda )$ for all $k\in 
%TCIMACRO{\U{2124} }%
%BeginExpansion
\mathbb{Z}
%EndExpansion
$. So since $\overline{\zeta ^{k}}=\zeta ^{-k}\equiv 1\equiv \zeta ^{k}$ $%
(\lambda )$, $\alpha \equiv \overline{\alpha }$ $(\lambda )$ for all $\alpha
\in \mathcal{O}_{K}$. Namely, $u\equiv \overline{u}=\pm \zeta ^{-t}u\equiv
\pm u$ $(\lambda )$. So if $\upsilon =-\zeta ^{t}$, $u\equiv -u$ $(\lambda
)\Rightarrow 2u\equiv 0$ $(\lambda )$ which is impossible since $N(\lambda
)=p\nmid N(2u)=2^{p-1}$ since $p$ is odd. So $\upsilon =+\zeta ^{t}$.
\end{proof}

\begin{theorem}
\label{egreg}Let $u$ be a unit in $\mathcal{O}_{K}$. Then $u$ is real $%
\Leftrightarrow $ $u$ is primary in $\mathcal{O}_{K}$.
\end{theorem}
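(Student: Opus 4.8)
The plan is to funnel both conditions through a single integer invariant of the unit $u$, namely the exponent $t$ supplied by Lemma \ref{ratioUnit}, and to show that each of ``$u$ real'' and ``$u$ primary'' is equivalent to $t \equiv 0 \pmod{p}$. Throughout I write $\lambda = 1-\zeta$. First, since $u$ is a unit it is prime to $p$, so the degree-one (Dedekind) argument used in the proof of Proposition \ref{Only1PrimaryGeneral} applies verbatim and lets me write $u \equiv a_{0}+a_{1}\lambda \pmod{\lambda^{2}}$ with $a_{0},a_{1}\in \mathbb{Z}$ and $a_{0}\not\equiv 0 \pmod{p}$. Because $\mathcal{O}_{K}/(\lambda^{2})$ has exactly $N(\lambda)^{2}=p^{2}$ elements and the $p^{2}$ classes $a_{0}+a_{1}\lambda$ with $0\le a_{0},a_{1}<p$ are distinct, this two-term expansion is unique in each coefficient modulo $p$. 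Consequently $u$ is primary, i.e.\ congruent to a rational integer modulo $\lambda^{2}$, if and only if $a_{1}\equiv 0 \pmod{p}$; here I use that for a rational integer $n$ one has $\lambda \mid n \Leftrightarrow p \mid n$, which follows from $N(\lambda)=p$ and the fact that $\lambda$ is the unique prime above $p$.

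Next I record the conjugate side. Since $\zeta^{-1}=(1-\lambda)^{-1}\equiv 1+\lambda \pmod{\lambda^{2}}$, I get $\overline{\lambda}=1-\zeta^{-1}\equiv -\lambda \pmod{\lambda^{2}}$; and because $\overline{\lambda}$ and $\lambda$ are associates (as already noted in the excerpt, all the $1-\zeta^{i}$ are associate to $\lambda$), the ideals $(\overline{\lambda}^{2})$ and $(\lambda^{2})$ coincide, so reduction modulo $\overline{\lambda}^{2}$ is the same as reduction modulo $\lambda^{2}$. Applying complex conjugation — which is a Galois automorphism of $\mathcal{O}_{K}$, as established in the proof of Lemma \ref{ratioUnit} — to $u \equiv a_{0}+a_{1}\lambda$ and using $\overline{a_{i}}=a_{i}$ then yields $\overline{u}\equiv a_{0}-a_{1}\lambda \pmod{\lambda^{2}}$.

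Now I invoke Lemma \ref{ratioUnit}: $u=\zeta^{t}\,\overline{u}$ for some $t\in \mathbb{Z}$, and $u$ is real precisely when $\zeta^{t}=1$, i.e.\ when $t\equiv 0 \pmod{p}$. Expanding $\zeta^{t}=(1-\lambda)^{t}\equiv 1-t\lambda \pmod{\lambda^{2}}$ and substituting the expression for $\overline{u}$ gives $u\equiv (1-t\lambda)(a_{0}-a_{1}\lambda)\equiv a_{0}-(a_{1}+t a_{0})\lambda \pmod{\lambda^{2}}$. Comparing this with $u\equiv a_{0}+a_{1}\lambda$ and using the uniqueness of the expansion forces the single linear congruence $2a_{1}+t a_{0}\equiv 0 \pmod{p}$. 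Since $a_{0}$ is invertible modulo $p$ and $2$ is invertible (as $p$ is odd), this relation shows at once that $a_{1}\equiv 0 \pmod{p}\Leftrightarrow t\equiv 0 \pmod{p}$. Chaining the equivalences, $u$ primary $\Leftrightarrow a_{1}\equiv 0\ (p)\Leftrightarrow t\equiv 0\ (p)\Leftrightarrow u$ real, which is the claim.

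The step to watch is the bookkeeping modulo $\lambda^{2}$: verifying $\overline{\lambda}\equiv -\lambda$, that reduction mod $\overline{\lambda}^{2}$ agrees with reduction mod $\lambda^{2}$, and the uniqueness of the two-coefficient expansion so that ``primary'' translates cleanly into $p\mid a_{1}$. Once these are in place the rest is the short linear congruence above. A more pedestrian alternative would handle the two implications separately — for the direction ``primary $\Rightarrow$ real'' one feeds $u\equiv \overline{u}\pmod{\lambda^{2}}$ into $(\zeta^{t}-1)\overline{u}\equiv 0$ and uses that $\lambda$ exactly divides $\zeta^{t}-1$ when $p\nmid t$ to force $t\equiv 0$ — but the unified computation is cleaner and I would present that.
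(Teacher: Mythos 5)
Your proof is correct, and while it shares the paper's overall strategy --- funneling both conditions through the exponent $t$ of Lemma \ref{ratioUnit} and showing each is equivalent to $t\equiv 0\ (p)$ --- the way you extract the decisive congruence is genuinely different. The paper works globally in the power basis: it writes $u=\sum_{k=0}^{p-2}a_{k}\zeta ^{k}$, uses $\overline{u}=\zeta ^{-t}u$ to derive symmetry relations among the $p-1$ coefficients (equation (\ref{mainprimeq1})), computes the primary-normalizing exponent $c$ by the substitution $x\mapsto 1-x$ in the polynomial $\sum a_{k}x^{k}$, and then evaluates the resulting sums $\left( \sum a_{k}\right) x\equiv -\sum ka_{k}$ by pairing indices, arriving at $c\equiv -2^{-1}t\ (p)$. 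You instead work entirely locally in $\mathcal{O}_{K}/(\lambda ^{2})$: two coefficients $a_{0},a_{1}$ suffice, complex conjugation acts by $\overline{\lambda }\equiv -\lambda \ (\lambda ^{2})$ (justified correctly via $\overline{\lambda }=-\zeta ^{-1}\lambda $ being an associate of $\lambda $), and the comparison $u=\zeta ^{t}\overline{u}$ collapses to the single congruence $2a_{1}+ta_{0}\equiv 0\ (p)$ --- which is exactly the paper's $c\equiv -2^{-1}t$ in disguise, since $c\equiv a_{1}a_{0}^{-1}$ by Proposition \ref{Only1PrimaryGeneral}. Your route buys brevity and robustness: it avoids all the index bookkeeping (the most delicate and least transparent part of the paper's argument), and your explicit verification that the $p^{2}$ classes $a_{0}+a_{1}\lambda $ exhaust $\mathcal{O}_{K}/(\lambda ^{2})$ supplies a uniqueness statement the paper only uses implicitly. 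What the paper's computation offers in exchange is an explicit description of how $t$ interacts with the power-basis coefficients of $u$, which is of some independent interest but is not needed for the theorem.
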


\begin{proof}
Since $\mathcal{O}_{K}=%
%TCIMACRO{\U{2124} }%
%BeginExpansion
\mathbb{Z}
%EndExpansion
\lbrack \zeta _{p}]=\left\langle 1,\zeta _{p},...,\zeta
_{p}^{p-2}\right\rangle _{%
%TCIMACRO{\U{2124} }%
%BeginExpansion
\mathbb{Z}
%EndExpansion
}$, we can write $u$ as $\sum\limits_{k=0}^{p-2}a_{k}\zeta ^{k}$ for unique $%
a_{0},...,a_{p-2}\in 
%TCIMACRO{\U{2124} }%
%BeginExpansion
\mathbb{Z}
%EndExpansion
$. And so, noting that $\zeta ^{p-1}=-\sum\limits_{i=0}^{p-2}\zeta ^{i}$, $%
\zeta ^{-t}u=\sum\limits_{k=0}^{p-2}a_{k}\zeta
^{k-t}=\sum\limits_{k=0}^{p-2}(a_{k+t}-a_{(p-1)+t})\zeta ^{k}$ where $a_{k}$
is defined to be $a_{(k\text{ }\func{mod}p)}$ for all $k\notin \{0,...,p-1\}$
($a_{p-1}=0$, trivially). And so $\sum\limits_{k=0}^{p-2}(a_{p-k}-a_{1})%
\zeta ^{k}=\overline{u}=$ $\zeta
^{-t}u=\sum\limits_{k=0}^{p-2}(a_{k+t}-a_{(p-1)+t})\zeta ^{k}$ by \ref%
{ratioUnit} and therefore, since this representation is unique, we get%
\begin{equation}
a_{k+t}-a_{(p-1)+t}=a_{p-k}-a_{1}\text{ for all }0\leq k\leq p-1
\label{mainprimeq}
\end{equation}

Letting $k_{0}$ be the $\func{mod}p$ solution to $k+t\equiv p-k$ $(p)$, we
get $a_{k_{0}+t}=a_{p-k_{0}}$ and so (\ref{mainprimeq}) yields $%
a_{(p-1)+t}=a_{1}$. (\ref{mainprimeq}) then becomes%
\begin{equation}
a_{k+t}=a_{p-k}=a_{-k}\text{ for all }0\leq k\leq p-1  \label{mainprimeq1}
\end{equation}

Since replacing $k$ by $-(k+t)$ in (\ref{mainprimeq1}) leaves the equation
invariant, we get $\frac{p-1}{2}$ pairs of equal terms with distinct indices
amongst $a_{0},...,a_{p-1}$ (the 'remaining' term being $a_{k_{0}+t}$). Let $%
b_{1},...,b_{\frac{p-1}{2}}$ be representatives of these distinct pairs, and
let $b_{k_{0}+t}=a_{k_{0}+t}$ (we have simply selected and reordered the $%
a_{i}$'s).

Now by the proof of \ref{Only1PrimaryGeneral}, there is a unique $c$ modulo $%
p$ such that $\zeta ^{c}u$ is primary, and this $c$ is the solution to $%
ax\equiv b$ $(p)$ where $u\equiv a+b\lambda $ $(\lambda ^{2})$ where $%
\lambda =(1-\zeta )$. Now $u=$ $\sum\limits_{k=0}^{p-2}a_{k}\zeta ^{k}$.
Writing, as a polynomial, $f(x)=\sum\limits_{k=0}^{p-2}a_{k}x^{k}$, we can
find $a$ and $b$ by finding the coefficients of $1$ and $x$ respectively of $%
f(1-x)$ since $\zeta =1-\lambda $. Making elementary use of the Binomial
Theorem, we see that $f(1-x)=\sum\limits_{k=0}^{p-2}a_{k}(1-x)^{k}=\sum%
\limits_{k=0}^{p-2}a_{k}-\sum\limits_{k=0}^{p-2}ka_{k}x+...$ (we only need
the first two terms). So $c$ is the solution to%
\begin{equation}
\left( \sum\limits_{k=0}^{p-2}a_{k}\right) x\equiv
-\sum\limits_{k=0}^{p-2}ka_{k}\text{ }(p)  \label{mainprimeq21}
\end{equation}

Which, since $a_{p-1}=0$, is equivalent to%
\begin{equation}
\left( \sum\limits_{k=0}^{p-1}a_{k}\right) x\equiv
-\sum\limits_{k=0}^{p-1}ka_{k}\text{ }(p)  \label{mainprimeq3}
\end{equation}

Now $k_{0}+t\equiv p-k_{0}$ $(p)\Rightarrow k_{0}+t\equiv -(k_{0}+t)+t$ $%
(p)\Rightarrow (k_{0}+t)\equiv 2^{-1}t\Rightarrow
b_{k_{0}+t}=a_{k_{0}+t}=a_{2^{-1}t}$. Finally, note that for $%
a_{i}=a_{t-i}=b_{l}$ for $1\leq l\leq \frac{p-1}{2}$ by (\ref{mainprimeq1}), 
$ia_{i}+(t-i)a_{t-i}=tb_{l}$.

(\ref{mainprimeq3}) then becomes $\left( b_{k_{0}+t}+2\sum\limits_{k=1}^{%
\frac{p-1}{2}}b_{k}\right) x\equiv -\left( (2^{-1}t\func{mod}%
p)b_{k_{0}}+\sum\limits_{k=1}^{\frac{p-2}{2}}tb_{k}\right) $ $(p)$. It is
clear that $c\equiv -2^{-1}t$ $(p)$ is the solution to this congruence. By
its uniqueness, we see that $u$ is primary $\Leftrightarrow t\equiv 0$ $%
(p)\Leftrightarrow u=\zeta ^{t}\overline{u}$ is real.
\end{proof}

\section{Application to a Special Case of Fermat's Last Theorem}

Fermat's well-known final theorem, proved by Andrew Wiles and Richard Taylor
in 1994, states that%
\begin{equation*}
x^{n}+y^{n}=z^{n}
\end{equation*}

where $x,y,z,n\in 
%TCIMACRO{\U{2124} }%
%BeginExpansion
\mathbb{Z}
%EndExpansion
$ has no non-trivial solutions $(x,y,z)$ for $n\geq 3$.

In fact, to prove this theorem, it suffices to prove that $x^{p}+y^{p}=z^{p}$
has no integral solutions for any positive odd prime $p$, since $%
x_{0}^{n}+y_{0}^{n}=z_{0}^{n}\Rightarrow x_{1}^{p}+y_{1}^{p}=z_{1}^{p}$
where $p$ is an odd prime dividing $n$ (exists since $n\geq 3$) and $%
(x_{1},y_{1},z_{1})=(x_{0}^{n/p},y_{0}^{n/p},z_{0}^{n/p})$. In other words,
we can restrict our study to the case where $n$ is an odd prime.

There is a very elegant proof of a special case of this theorem using
cyclotomy. The main use of the concept here is that it allows us to
transform a "sum of $n$-th powers" problem into a "divisibility" problem
since we can now factor $x^{p}+y^{p}$ as $\prod\limits_{i=0}^{p-1}(x+\zeta
_{p}^{i}y)$.\bigskip

In this section, we shall lay out said proof. Let $K=%
%TCIMACRO{\U{211a} }%
%BeginExpansion
\mathbb{Q}
%EndExpansion
(\zeta )$ where $\zeta =\zeta _{p}$. We will suppose that for some $%
(x_{0,}y_{0},z_{0})$ is a solution to $x^{p}+y^{p}=z^{p}$ for some odd prime 
$p$. Then%
\begin{equation}
x_{0}^{p}+y_{0}^{p}=z_{0}^{p}  \label{FermatEq}
\end{equation}

WLOG, we can take $x_{0}$, $y_{0}$ and $z_{0}$ to be pairwise relatively
prime, for if some $d\in 
%TCIMACRO{\U{2124} }%
%BeginExpansion
\mathbb{Z}
%EndExpansion
$ divides two of them, it must divide the 3$^{\text{rd}}$, and then $%
x_{0}^{p}+y_{0}^{p}=z_{0}^{p}\Leftrightarrow x_{1}^{p}+y_{1}^{p}=z_{1}^{p}$
where $x_{0},y_{0},z_{0}=dx_{1},dy_{1}$\bigskip $,dz_{1}$ respectively, with 
$x_{1},y_{1},z_{1}\in 
%TCIMACRO{\U{2124} }%
%BeginExpansion
\mathbb{Z}
%EndExpansion
$.

We shall now reduce the problem to a special case and suppose that $p$\emph{%
\ does not divide the class number }$h$\emph{\ of }$O_{K}$, and that $p\nmid
x_{0}y_{0}z_{0}$. From (\ref{FermatEq}), we shall reach a
contradiction.\bigskip\ This case has been treated in Number Theory
textbooks such as \cite{BnS}. However, using the equivalence of primary and
real units in $\mathcal{O}_{K}$ when $K$ is a prime cyclotomic field, we can
prove the result more rapidly.

\begin{lemma}
\label{CoprimeIdeals}Let $i\not\equiv j$ $(p)$. Then the ideals $%
I=(x_{0}+\zeta ^{i}y_{0})$ and $J=(x_{0}+\zeta ^{j}y_{0})$ are relatively
prime.
\end{lemma}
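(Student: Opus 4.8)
The plan is to argue by contradiction: suppose a prime ideal $\mathfrak{P}$ divides both $I = (x_0 + \zeta^i y_0)$ and $J = (x_0 + \zeta^j y_0)$, and derive a contradiction with the coprimality of $x_0, y_0, z_0$ together with the hypothesis $p \nmid x_0 y_0 z_0$. The key observation is that if $\mathfrak{P}$ contains both generators, then it contains their difference, namely $(\zeta^i - \zeta^j) y_0 = \zeta^j(\zeta^{i-j} - 1) y_0$. Since $\zeta^j$ is a unit, $\mathfrak{P}$ must contain $(\zeta^{i-j} - 1) y_0$, hence $\mathfrak{P}$ divides the ideal $(\zeta^{i-j} - 1)(y_0)$. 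Similarly, by writing the difference in a way that isolates $x_0$ — for instance using $\zeta^i(x_0 + \zeta^j y_0) - \zeta^j(x_0 + \zeta^i y_0) = (\zeta^i - \zeta^j) x_0$ — I would conclude that $\mathfrak{P}$ also divides $(\zeta^{i-j}-1)(x_0)$.

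Next I would exploit that $i \not\equiv j \pmod p$ means $\zeta^{i-j}$ is a primitive $p$-th root of unity, so $(\zeta^{i-j} - 1)$ is associate to $\lambda = (1 - \zeta)$, as established in the proof of Proposition \ref{Only1PrimaryGeneral} (there the argument that $(1-\zeta^2)$ and $(1-\zeta)$ are associates generalizes to any $(1 - \zeta^k)$ with $p \nmid k$ via the norm computation $N\bigl(\frac{1-\zeta^k}{1-\zeta}\bigr) = 1$). The prime ideal $\mathfrak{P}$ therefore divides either $P = (\lambda)$ or one of $(x_0)$, $(y_0)$. I would rule out the first case by noting $N(P) = p$, so $\mathfrak{P} = P$ would force $p \mid N(x_0 + \zeta^i y_0)$; but reducing mod $\lambda$ (using $\zeta \equiv 1 \pmod\lambda$, as in Lemma \ref{ratioUnit}) gives $x_0 + \zeta^i y_0 \equiv x_0 + y_0 \pmod\lambda$, and $x_0 + y_0 \equiv z_0 \pmod{\text{(small terms)}}$ — more directly, $p \mid (x_0 + y_0)$ would eventually clash with $p \nmid x_0 y_0 z_0$ through the Fermat relation, so $\mathfrak{P} \neq P$.

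In the remaining case, $\mathfrak{P}$ divides $(x_0)$ and $(y_0)$ simultaneously (the same difference argument yields both), so the rational prime $q$ lying below $\mathfrak{P}$ divides both $x_0$ and $y_0$, contradicting the assumption that $x_0, y_0, z_0$ are pairwise relatively prime. The main obstacle I anticipate is the careful bookkeeping in the $\mathfrak{P} = P$ case: one must cleanly separate the divisibility coming from $\lambda$ from genuine divisibility of $x_0$ or $y_0$, and confirm that $p \nmid x_0 y_0 z_0$ really does exclude $\lambda \mid I$. This hinges on the observation that $\lambda \mid (x_0 + \zeta^i y_0)$ forces $p \mid N(x_0 + \zeta^i y_0)$ and, through the factorization $\prod_i (x_0 + \zeta^i y_0) = x_0^p + y_0^p = z_0^p$, ultimately forces $p \mid z_0$ — the precise contradiction to isolate.
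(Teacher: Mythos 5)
Your proposal is correct, and its skeleton is the paper's own: the same two difference computations place $(\zeta ^{i}-\zeta ^{j})y_{0}$ and $(\zeta ^{i}-\zeta ^{j})x_{0}$ inside any common divisor of $I$ and $J$, the element $\zeta ^{i}-\zeta ^{j}$ is identified as essentially $\lambda =1-\zeta $ (you via the associate argument generalizing the norm trick in the proof of Proposition \ref{Only1PrimaryGeneral}, the paper via the direct computation $N(\zeta ^{i}-\zeta ^{j})=p$), and the dangerous case is eliminated by the same chain $p\mid N(x_{0}+\zeta ^{i}y_{0})\mid z_{0}^{p}\Rightarrow p\mid z_{0}$, contradicting $p\nmid x_{0}y_{0}z_{0}$. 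The organizational difference is where coprimality of $x_{0}$ and $y_{0}$ enters: the paper works directly with the ideal $I+J$ and uses B\'{e}zout in $\mathbb{Z}$ (choosing $a,b$ with $ax_{0}+by_{0}=1$) to conclude $\zeta ^{i}-\zeta ^{j}\in I+J$ in one stroke, so that $N(I+J)\mid p$ and only the single possibility $N(I+J)=p$ must be excluded; you instead posit a common prime divisor $\mathfrak{P}$ and let primality do the splitting, which leaves you a second case ($\mathfrak{P}\mid (x_{0})$ and $\mathfrak{P}\mid (y_{0})$ simultaneously), dispatched by passing to the rational prime below $\mathfrak{P}$ and invoking $\gcd (x_{0},y_{0})=1$. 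The paper's B\'{e}zout step is the slicker use of coprimality, collapsing your two cases into one; your version buys independence from norms of ideal sums (everything reduces to unique factorization into prime ideals), replaces the paper's somewhat laborious telescoping norm computation with the associate-to-$\lambda $ observation, and your alternative finish by reduction modulo $\lambda $ (using $\zeta \equiv 1$ $(\lambda )$ and Fermat's little theorem to get $p\mid x_{0}+y_{0}$, hence $p\mid z_{0}^{p}$) is equally valid and arguably cleaner than comparing ideal norms.
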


\begin{proof}
Consider the ideal $I+J$. $J$ contains the element $-(x_{0}+\zeta ^{j}y_{0})$%
, so $x_{0}+\zeta ^{i}y_{0}-(x_{0}+\zeta ^{j}y_{0})=(\zeta ^{i}-\zeta
^{j})y_{0}\in I+J$. Likewise, since $\mathcal{O}_{K}=%
%TCIMACRO{\U{2124} }%
%BeginExpansion
\mathbb{Z}
%EndExpansion
\lbrack \zeta ]$, $-\zeta ^{j}(x_{0}+\zeta ^{i}y_{0})=\zeta ^{j}x_{0}+\zeta
^{i+j}y_{0}\in I$ and $\zeta ^{i}(x_{0}+\zeta ^{j}y_{0})=\zeta
^{i}x_{0}+\zeta ^{i+j}y_{0}\in J$. So $\zeta ^{i}x_{0}+\zeta
^{i+j}y_{0}-\zeta ^{j}(x_{0}+\zeta ^{i}y_{0})=(\zeta ^{i}-\zeta
^{j})x_{0}\in I+J$. Now $(x_{0},y_{0})=1\Rightarrow $ there exist $a,b\in 
%TCIMACRO{\U{2124} }%
%BeginExpansion
\mathbb{Z}
%EndExpansion
$ such that $ax_{0}+by_{0}=1$. So $a(\zeta ^{i}-\zeta ^{j})x_{0}+b(\zeta
^{i}-\zeta ^{j})y_{0}=(\zeta ^{i}-\zeta ^{j})\in I+J$. 

Now $N(\zeta ^{i}-\zeta ^{j})=p$ since $(N(\zeta ^{i}-\zeta ^{j}))^{2}=$ $%
\prod\limits_{k=1}^{p-1}(\zeta ^{ik}-\zeta
^{jk})^{2}=\prod\limits_{k=1}^{p-1}(-\zeta ^{-k(j-i)})(1-\zeta
^{k(j-i)})^{2}=\prod\limits_{k=1}^{p-1}(-\zeta ^{-k})(1-\zeta
^{k})^{2}=+\zeta ^{-p\frac{p-1}{2}}\prod\limits_{k=1}^{p-1}(1-\zeta
^{k})^{2}=1\cdot \left( \sum\limits_{k=1}^{p-1}1\right) ^{2}=p^{2}$. So $%
N(I+J)\mid p$. If $N(I+J)=p$, then since $I\subseteq I+J$, $p=N(I+J)\mid
N(I)=\prod\limits_{i=0}^{p-1}(x_{0}+\zeta
^{i}y_{0})=x_{0}^{p}+y_{0}^{p}=z_{0}^{p}$. So since $p$ is prime, $p\mid
z_{0}\Rightarrow $ contradiction. So $N(I+J)=1$, and therefore $I+J=\mathcal{%
O}_{K}$. So $I$ and $J$ are coprime since $P\mid I$ and $P\mid J\Rightarrow
P\mid I+J\Rightarrow P=\mathcal{O}_{K}$.
\end{proof}

Now $x_{0}^{p}+y_{0}^{p}=z_{0}^{p}\Rightarrow
\prod\limits_{i=0}^{p-1}(x_{0}+\zeta ^{i}y_{0})=(z_{0})^{p}$ as ideals. But $%
\{(x_{0}+\zeta ^{i}y_{0}):0\leq i\leq p-1\}$ are pairwise coprime. So by
unique factorization of ideals, each of these ideals must be a $p$-th power.
So in particular, taking $i=1$, $(x_{0}+\zeta y_{0})=\mathfrak{I}^{p}$ for
some ideal $\mathfrak{I}$. So since $(x_{0}+\zeta y_{0})$ is principal, $[%
\mathfrak{I]}$ has order dividing $p$ in the ideal class group, but since $%
p\nmid h$, we must have that the order of $[\mathfrak{I]}$ is $1$.\ So $%
\mathfrak{I}$ is principal. Let $\mathfrak{I}=(\alpha )$. Then $(x_{0}+\zeta
y_{0})=(\alpha ^{p})$, and so $x_{0}+\zeta y_{0}$ is associate to $\alpha
^{p}$. We write $x_{0}+\zeta y_{0}=u\alpha ^{p}$ where $u$ is a unit in $%
\mathcal{O}_{K}$.

Then by \ref{Only1PrimaryGeneral} there exists a unique $c$ modulo $p$ such
that $\zeta ^{-c}u$ is primary. Let $\zeta ^{-c}u=u_{0}$ so that $u=\zeta
^{c}u_{0}$ where $u_{0}$ is primary. But $u_{0}$ is trivially a unit, and is
therefore real by \ref{egreg}.

So $x_{0}+\zeta y_{0}=\zeta ^{c}u_{0}\alpha ^{p}$ where $u_{0}$ is real.
Note that modulo $p$, $\alpha ^{p}\equiv \left(
\sum\limits_{i=0}^{p-2}a_{i}\zeta ^{i}\right) ^{p}\equiv
\sum\limits_{i=0}^{p-2}a_{i}^{p}\zeta ^{ip}\equiv
\sum\limits_{i=0}^{p-2}a_{i}^{p}\in 
%TCIMACRO{\U{2124} }%
%BeginExpansion
\mathbb{Z}
%EndExpansion
$ \ $(p)$. So $\alpha ^{p}\equiv \overline{\alpha ^{p}}$ $(p)$. It follows
that $x_{0}+\zeta y_{0}=\zeta ^{c}u_{0}\alpha ^{p}\Rightarrow x_{0}+\zeta
y_{0}\equiv \zeta ^{c}u_{0}\alpha ^{p}$ $(p)\Rightarrow \overline{%
x_{0}+\zeta y_{0}}\equiv \overline{\zeta ^{c}u_{0}\alpha ^{p}}$ $%
(p)\Rightarrow x_{0}+\zeta ^{-1}y_{0}\equiv \zeta ^{-c}u_{0}\alpha ^{p}$ $%
(p) $. So we now have $x_{0}+\zeta y_{0}\equiv \zeta ^{c}u_{0}\alpha ^{p}$ $%
(p)\Rightarrow \zeta ^{-c}x_{0}+\zeta ^{1-c}y_{0}\equiv u_{0}\alpha ^{p}$ $%
(p)$ and $x_{0}+\zeta ^{-1}y_{0}\equiv \zeta ^{-c}u_{0}\alpha ^{p}$ $%
(p)\Rightarrow \zeta ^{c}x_{0}+\zeta ^{c-1}y_{0}\equiv u_{0}\alpha ^{p}$ $%
(p) $. Subtracting the latter congruence from the former yields%
\begin{equation}
\zeta ^{-c}x_{0}+\zeta ^{1-c}y_{0}-\zeta ^{c}x_{0}-\zeta ^{c-1}y_{0}\equiv 0%
\text{ }(p)  \label{congeq1}
\end{equation}

Now an element of $\mathcal{O}_{K}=%
%TCIMACRO{\U{2124} }%
%BeginExpansion
\mathbb{Z}
%EndExpansion
\lbrack \zeta ]$ is divisible by $p$ if and only if all of the coefficients
as a polynomial in $\zeta $ are divisible by $p$. $p\nmid x_{0},y_{0}$ since 
$p\nmid x_{0}y_{0}z_{0}$, so we must check the cases where one of $%
\{c,-c,1-c,c-1\}$ is congruent to $-1$ modulo $p$ or where two of $%
\{c,-c,1-c,c-1\}$ are equal modulo $p$. These cases can be split as follows:

\begin{itemize}
\item $c\equiv 0$ $(p)$ (so that $c\equiv -c$ $(p)$). Then $p\mid
y_{0}(\zeta -\zeta ^{-1})=y_{0}(\sum\limits_{i=2}^{p-2}\zeta
^{i}+1)\Rightarrow p\mid y_{0}$ (even if $p=3$) $\Rightarrow $ contradiction.

\item $c\equiv 1$ $(p)$ (so that $1-c\equiv c-1$ $(p)$). Then $p\mid
x_{0}(\zeta ^{-1}-\zeta )\Rightarrow p\mid x_{0}$ as in the previous case $%
\Rightarrow $ contradiction.

\item $c\equiv 2^{-1}$ $(p)$ (so that $c\equiv 1-c$ $(p)$). Then $p\mid
(y_{0}-x_{0})\zeta ^{c}+\zeta ^{-c}(x_{0}-y_{0})$. So $p\mid (x_{0}-y_{0})$.
We then rewrite \ref{FermatEq} as $x_{0}^{p}+(-z_{0})^{p}=(-y_{0})^{p}$
(since $p$ is odd). Then with the same argument we will get $p\mid
(x_{0}+z_{0})$. But \ref{FermatEq} yields $x_{0}^{p}+y_{0}^{p}-z_{0}^{p}%
\equiv 0$ $(p)$ and so $x_{0}+y_{0}-z_{0}\equiv 0$ $(p)$. This yields $%
3x_{0}\equiv 0$ $(p)$. We suppose for now that $p>3$. Then this yields $%
p\mid x_{0}\Rightarrow $ contradiction.

\item Letting one of $\{c,-c,1-c,c-1\}$ be congruent to $-1$ modulo $p$ will
yield one of the coefficients of the terms of (\ref{congeq1}) as $\pm
(x_{0}-y_{0})$, giving the same contradiction as in the previous case.
\end{itemize}

\bigskip 

We therefore obtain a contradiction in all cases. We have, however, supposed
that $p>3\,$. A general study of the case where $p=3$ is done elegantly in 
\cite{Flynn}.\bigskip 

\section{An Approach to Pell's Equation using cyclotomy}

Pell's Equation is%
\begin{equation*}
x^{2}-dy^{2}=1\text{, \ \ }x,y\in 
%TCIMACRO{\U{2124}}%
%BeginExpansion
\mathbb{Z}%
%EndExpansion
\end{equation*}

in $x$ and $y$, where $d\in 
%TCIMACRO{\U{2124} }%
%BeginExpansion
\mathbb{Z}
%EndExpansion
^{+}$. $d\leq 0$ trivially yields the single solution $(1,0)$, and we can
consider $d$ to be square-free, since any square factor of $d$ can be
incorporated into $y$.

The equation can be solved using cyclotomy and quadratic residues. A partial
solution was found by Dirichlet using this method, building upon the work of
Gauss \cite{Dir}. In this section, we build upon Dirichlet's work,
explicitly writing the solution and using the modern machinery of Galois
Theory to streamline the approach. Again, we let $p$ be an odd prime, and
define $p^{\ast }=(-1)^{\frac{p-1}{2}}p$, $i=\sqrt{-1}$, and start by
introducing an important lemma.

\begin{lemma}
\label{MainPellLemma}$\left\{ 
\begin{array}{l}
q_{1}(x)=2\prod\limits_{\substack{ 1\leq k<p  \\ \QOVERD( ) {k}{p}=1}}%
(x-\zeta ^{k})=f(x)+\sqrt{p^{\ast }}g(x) \\ 
q_{-1}(x)=2\prod\limits_{\substack{ 1\leq k<p  \\ \QOVERD( ) {k}{p}=-1}}%
(x-\zeta ^{k})=f(x)-\sqrt{p^{\ast }}g(x)%
\end{array}%
\right. $ where $f(x),g(x)$ are polynomials in $%
%TCIMACRO{\U{2124} }%
%BeginExpansion
\mathbb{Z}
%EndExpansion
\lbrack x]$.
\end{lemma}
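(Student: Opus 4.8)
The plan is to exploit the Galois action of $G=\mathrm{Gal}(K/\mathbb{Q})\cong U(\mathbb{Z}/p\mathbb{Z})$ on the roots $\zeta^k$, together with the fact that the quadratic residues form the unique index-two subgroup $H\leq G$ (unique because $G$ is cyclic of order $p-1$). Writing $P(x)=\prod_{(k/p)=1}(x-\zeta^k)$ and $Q(x)=\prod_{(k/p)=-1}(x-\zeta^k)$, so that $q_{1}=2P$ and $q_{-1}=2Q$, the first step is to record how $\sigma_a\colon\zeta\mapsto\zeta^a$ acts. If $a$ is a quadratic residue then $k\mapsto ak$ permutes the residues among themselves and the non-residues among themselves, so $\sigma_a$ fixes both $P$ and $Q$; if $a$ is a non-residue then $\sigma_a$ interchanges the two classes and hence swaps $P\leftrightarrow Q$. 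Fix one non-residue automorphism and call it $\tau$.

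Consequently the coefficients of $P$ (and of $Q$) are fixed by $H$, so they lie in the fixed field $K^{H}$, the unique quadratic subfield of $K$. The second step is to identify this field as $\mathbb{Q}(\sqrt{p^{\ast}})$. For this I would introduce the quadratic Gauss sum $w=\sum_{k=1}^{p-1}\left(\frac{k}{p}\right)\zeta^{k}$ and check directly that $\sigma_a(w)=\left(\frac{a}{p}\right)w$, so that $w$ is fixed by $H$ and negated by $\tau$; combined with the classical evaluation $w^{2}=p^{\ast}=(-1)^{(p-1)/2}p$ this shows $\sqrt{p^{\ast}}=\pm w\in K^{H}$, and since $[K^{H}:\mathbb{Q}]=2$ we conclude $K^{H}=\mathbb{Q}(\sqrt{p^{\ast}})$. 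Thus every coefficient of $q_{1}$ is uniquely $f_{j}+g_{j}\sqrt{p^{\ast}}$ with $f_{j},g_{j}\in\mathbb{Q}$, which defines $f(x),g(x)\in\mathbb{Q}[x]$ with $q_{1}=f+\sqrt{p^{\ast}}g$; applying $\tau$ (which fixes $\mathbb{Q}$, negates $\sqrt{p^{\ast}}$, and sends $q_{1}\mapsto q_{-1}$) then yields $q_{-1}=f-\sqrt{p^{\ast}}g$ for free.

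The final step is to upgrade $f,g$ from $\mathbb{Q}[x]$ to $\mathbb{Z}[x]$, and this is exactly where the leading factor $2$ earns its keep. From the two identities above, $f=\tfrac12(q_{1}+q_{-1})=P+Q$ and $\sqrt{p^{\ast}}\,g=\tfrac12(q_{1}-q_{-1})=P-Q$. The coefficients of $P+Q$ are $G$-invariant, hence rational, and are algebraic integers (sums of products of the $\zeta^{k}$), hence rational integers, giving $f\in\mathbb{Z}[x]$. For $g$, each $g_{j}$ is rational while $g_{j}\sqrt{p^{\ast}}$ is a coefficient of $P-Q$, hence an algebraic integer; then $g_{j}^{2}p^{\ast}\in\mathbb{Z}$, and since $p^{\ast}$ is squarefree, writing $g_{j}=r/s$ in lowest terms forces $s^{2}\mid p^{\ast}$, i.e.\ $s=1$, so $g_{j}\in\mathbb{Z}$ and $g\in\mathbb{Z}[x]$. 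Without the factor $2$ these same expressions would only land in $\tfrac12\mathbb{Z}$, since the ring of integers of $\mathbb{Q}(\sqrt{p^{\ast}})$ is $\mathbb{Z}\!\left[\tfrac{1+\sqrt{p^{\ast}}}{2}\right]$ (note $p^{\ast}\equiv1\pmod4$).

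The only genuinely substantial ingredient is the Gauss-sum evaluation $w^{2}=p^{\ast}$, which is what simultaneously pins down the quadratic subfield and fixes the precise sign hidden in $p^{\ast}=(-1)^{(p-1)/2}p$; everything else is bookkeeping with the Galois action and an elementary integrality/squarefreeness argument. The main obstacle is therefore recalling or citing this classical fact with the correct sign, since an error there would attach the wrong radical to $f$ and $g$ and break the symmetric $f\pm\sqrt{p^{\ast}}g$ form of the two factors.
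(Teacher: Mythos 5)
Your proof is correct, and it shares the paper's overall skeleton: the automorphisms $\sigma _{a}$ with $a$ a quadratic residue fix your $P$ and $Q$ separately, so their coefficients lie in the quadratic subfield $\mathbb{Q}(\sqrt{p^{\ast }})$, and a non-residue automorphism $\tau $ both negates $\sqrt{p^{\ast }}$ and swaps $q_{1}\leftrightarrow q_{-1}$, which is exactly how the paper also deduces $q_{-1}=f-\sqrt{p^{\ast }}g$ from $q_{1}=f+\sqrt{p^{\ast }}g$. You diverge in the two supporting steps, in both cases usefully. First, to realize $\sqrt{p^{\ast }}$ inside $K$ the paper exhibits the explicit element $\theta =\zeta ^{(p^{2}-1)/8}\prod_{k=1}^{(p-1)/2}(1-\zeta ^{k})^{2}$ with $\theta ^{2}=p^{\ast }$, and must then argue separately that $\sigma $ fixes $\sqrt{p^{\ast }}$ if and only if $\sigma $ is a square in the Galois group; your quadratic Gauss sum $w=\sum_{k}\left( \frac{k}{p}\right) \zeta ^{k}$ does both jobs at once, since the equivariance $\sigma _{a}(w)=\left( \frac{a}{p}\right) w$ is immediate and identifies the fixed field directly. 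Note that both arguments rest on an essentially unproved classical evaluation ($w^{2}=p^{\ast }$ for you, $\theta ^{2}=p^{\ast }$ for the paper), so neither is more complete on that point. Second, for integrality the paper quotes the structure of the ring of integers $\mathbb{Z}\left[ \frac{1+\sqrt{p^{\ast }}}{2}\right] $ of $\mathbb{Q}(\sqrt{p^{\ast }})$ and lets the leading factor $2$ clear the half-integer denominators, whereas you work with $f=P+Q$ and $\sqrt{p^{\ast }}g=P-Q$, whose coefficients are algebraic integers, and finish with the elementary facts that a rational algebraic integer is a rational integer and that $p^{\ast }$ is squarefree. Your route needs less algebraic number theory, and it recovers (rather than assumes) the reason the factor $2$ is indispensable; conversely, the paper's route makes the role of $p^{\ast }\equiv 1\pmod{4}$ and of the half-integers visible at the outset.
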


\begin{proof}
Note that the product of the 2 above polynomials (on the left-hand side) is $%
4\prod\limits_{1\leq k<p}(x-\zeta ^{k})=4m_{p}(x)\in 
%TCIMACRO{\U{2124} }%
%BeginExpansion
\mathbb{Z}
%EndExpansion
\lbrack x]$. It is therefore fixed by any Galois automorphism in $Gal(K:%
%TCIMACRO{\U{211a} }%
%BeginExpansion
\mathbb{Q}
%EndExpansion
)$. Now taking $\theta =\zeta ^{\frac{p^{2}-1}{8}}\prod\limits_{k=1}^{\frac{%
p-1}{2}}(1-\zeta ^{k})^{2}$, we see that $\theta ^{2}=p^{\ast }$ since $%
(-1)^{\frac{p^{2}-1}{8}}\equiv \left( \frac{2}{p}\right) $ $($mod $2)$, and
trivially $\theta \in \mathcal{O}_{K}$. So $\sqrt{p^{\ast }}\in \mathcal{O}%
_{K}$, Now an automorphism $\sigma $ in the Galois group fixes $p^{\ast }$
if and only if $\sigma $ is a square. But this is if and only if $\sigma $
fixes all (and only) the $\zeta ^{k}$ such that $k$ is a quadratic residue
modulo $p$. So $\prod\limits_{\substack{ 1\leq k<p \\ \QOVERD( ) {k}{p}=1}}%
(x-\zeta ^{k})\in L[x]$ where $L=%
%TCIMACRO{\U{211a} }%
%BeginExpansion
\mathbb{Q}
%EndExpansion
(\sqrt{p^{\ast }})$. All the coefficients in $L[x]$ are of the form $a+b%
\sqrt{p^{\ast }}$ where $a$ and $b$ are both rational, and $\frac{1}{2}\cdot 
$ an algebraic integer (allowing for the fact that $p^{\ast }\equiv 1$ $(4)$%
). The coefficients of $2\prod\limits_{\substack{ 1\leq k<p \\ \QOVERD( )
{k}{p}=1}}(x-\zeta ^{k})$ are therefore rational algebraic integers and thus
in $%
%TCIMACRO{\U{2124} }%
%BeginExpansion
\mathbb{Z}
%EndExpansion
$. We can now expand $q_{1}(x)$ and rewrite it as $q_{1}(x)=f(x)+\sqrt{%
p^{\ast }}g(x)$ where $f(x),g(x)$ are polynomials in $%
%TCIMACRO{\U{2124} }%
%BeginExpansion
\mathbb{Z}
%EndExpansion
\lbrack x]$.{}

A similar argument shows that $q_{-1}(x)\in L[x]$. Now let $\tau $ be the
Galois automorphism in $Gal(K:Q)$ defined by $\tau (\sqrt{p^{\ast }})=-\sqrt{%
p^{\ast }}$ (noting that $K:L:%
%TCIMACRO{\U{211a} }%
%BeginExpansion
\mathbb{Q}
%EndExpansion
$ is a tower of fields). Then by the above, and since $\tau ^{2}$ must fix $%
q_{1}(x)$, we must have that $\tau (\zeta ^{k})=\zeta ^{l}$ where $\QOVERD(
) {k}{p}\QOVERD( ) {l}{p}=-1$. So since $\tau $ is a Galois automorphism
over $K$, we must have $\tau (q_{1}(x))=q_{-1}(x)$. This yields that $%
q_{-1}(x)=f(x)-\sqrt{p^{\ast }}g(x)$.
\end{proof}

We will primarily consider the case where $d$ is an odd prime. Pell's
Equation then becomes%
\begin{equation}
x^{2}-py^{2}=1  \label{Pell's}
\end{equation}

By Lemma \ref{MainPellLemma},%
\begin{equation*}
4m_{p}(x)=q_{1}(x)q_{-1}(x)=f(x)^{2}-(p^{\ast })g(x)^{2}
\end{equation*}

And so, replacing $x$ by $1$, we get%
\begin{equation}
4p=x_{1}^{2}-p^{\ast }y_{1}^{2}\text{ where }x_{1}=f(1)\text{, }y_{1}=g(1)
\label{Dirichlet1}
\end{equation}

Since $f(x),g(x)\in 
%TCIMACRO{\U{2124} }%
%BeginExpansion
\mathbb{Z}
%EndExpansion
\lbrack x]$, $x_{1},y_{1}\in 
%TCIMACRO{\U{2124} }%
%BeginExpansion
\mathbb{Z}
%EndExpansion
$, and we can see that Lemma \ref{MainPellLemma} relates to Pell's Equation
insofar as it gives us a pair $(x_{1},y_{1})$ that verifies an equation very
similar to (\ref{Pell's}).

$4p=x_{1}^{2}-p^{\ast }y_{1}^{2}\Rightarrow x_{1}^{2}=4p+p^{\ast
}y_{1}^{2}\Rightarrow p\mid x_{1}^{2}\Rightarrow p\mid x_{1}$ since $p$ is
prime. So letting $p\xi _{1}=x_{1}$, we can rewrite equation (\ref%
{Dirichlet1}) as $4p=p^{2}\xi _{1}^{2}-p^{\ast }y_{1}^{2}$, and so, dividing
by $p$,%
\begin{equation}
p\xi _{1}^{2}-(-1)^{\frac{p-1}{2}}y_{1}^{2}=4  \label{Dirichlet2}
\end{equation}

We now analyze $q_{1}(x)$ and $q_{-1}(x)$ to obtain some insight as to the
values $x_{1}$ and $y_{1}$. $x^{2}\equiv (p-x)^{2}$ $(p)$, so all quadratic
residues are in $\{x^{2}$ $(p):1\leq x\leq \frac{p-1}{2}\}$. We can
therefore reorder the terms in $q_{1}(x)$ and write it as $%
2\prod\limits_{k=1}^{\frac{p-1}{2}}(x-\zeta ^{k^{2}})$, and so $%
q_{1}(1)=2\prod\limits_{k=1}^{\frac{p-1}{2}}(1-\zeta ^{k^{2}})$.

The value of $p^{\ast }$ depends on the value of $p$ modulo $4$ so we will
consider the two cases separately for simplicity.

\bigskip

\textbf{Case 1:}\emph{\ }$p\equiv 1$ $(4)$.

Then (\ref{Dirichlet2}) becomes $p\xi _{1}^{2}-y_{1}^{2}=4$ (or, to
emphasize the similarity to Pell's Equation, $y_{1}^{2}-p\xi _{1}^{2}=-4$).

We then have two subcases.

If $p\equiv 1$ $(8)$, then $y_{1}^{2}-\xi _{1}^{2}\equiv 4$ $(8)$. Trivially 
$y_{1}$ and $\xi _{1}$ must either be both odd or both even. But $%
1^{2}\equiv 3^{2}\equiv 5^{2}\equiv 7^{2}\equiv 1$ $(8)$, so if $y_{1}$ and $%
\xi _{1}$ were both odd we would have $y_{1}^{2}-\xi _{1}^{2}\equiv 0$ $%
(8)\Rightarrow $ contradiction. It follows that $y_{1}$ and $\xi _{1}$ are
both even, and we can thus write $y_{2}=\frac{y_{1}}{2},\xi _{2}=\frac{\xi
_{1}}{2}\in 
%TCIMACRO{\U{2124} }%
%BeginExpansion
\mathbb{Z}
%EndExpansion
$. Then $y_{2}-p\xi _{2}^{2}=-1$. We can use the fact that $(\sqrt{p}%
)^{2}\in 
%TCIMACRO{\U{2124} }%
%BeginExpansion
\mathbb{Z}
%EndExpansion
$ to get rid of the minus sign in front of $1$. $y_{2}^{2}-p\xi _{2}^{2}=-1$
yields $(y_{2}-\sqrt{p}\xi _{2})(y_{2}+\sqrt{p}\xi _{2})=-1$, and so $(y_{2}-%
\sqrt{p}\xi _{2})^{2}(y_{2}+\sqrt{p}\xi _{2})^{2}=1$. But $(y_{2}\pm \sqrt{p}%
\xi _{2})^{2}=a\pm b\sqrt{p}$, $a,b\in 
%TCIMACRO{\U{2124} }%
%BeginExpansion
\mathbb{Z}
%EndExpansion
$. Taking $(x,y)=(a,b)$, we have solved (\ref{Pell's}). Summarizing, we get
a solution from%
\begin{eqnarray*}
(a,b) &=&\left( \frac{1}{4}(g(1)^{2}+\frac{f(1)^{2}}{p})\text{ },\text{ }%
\frac{f(1)g(1)}{2p}\right) \text{ } \\
&&\text{where we can directly compute }f(1)\text{ and }g(1)
\end{eqnarray*}

If $p\equiv 5$ $(8)$, $y_{1}^{2}+3\xi _{1}^{2}\equiv 4$ $(8)$. Given that
the only quadratic residues modulo $8$ are $0,1,4$, we must have $%
(y_{1}^{2},\xi _{1}^{2})\equiv (1,1),(0,4)$ or $(4,0)$ $\ (8)$.

We now use the fact that $8^{2}=2^{2\cdot 3}=4^{3}$ and consider $(y_{1}+%
\sqrt{p}\xi _{1})^{3}=(y_{1}^{3}+3p\xi _{1}^{2}y_{1})+\sqrt{p}(p\xi
_{1}^{3}+3y_{1}^{2}\xi _{1})=y_{2}+\sqrt{p}\xi _{2}$ and see that $%
y_{2}^{2}-p\xi _{2}^{2}=(y_{1}^{2}-p\xi _{1}^{2})^{3}=-4^{3}$.

But $y_{2}=y_{1}(y_{1}^{2}+3p\xi _{1}^{2})\equiv y_{1}(y_{1}^{2}-\xi
_{1}^{2})$ $(8)$. $(y_{1}^{2},\xi _{1}^{2})\equiv (1,1)$ $(8)\Rightarrow $ $%
y_{2}\equiv 0$ $(8)$. $(y_{1}^{2},\xi _{1}^{2})\equiv (0,4)$ or $(4,0)$ $%
(8)\Rightarrow y_{2}\equiv 4\cdot 4,$ $0\cdot 4$ or $\pm 2\cdot 4\equiv 0$ $%
(8)$. So in any case $y_{2}\equiv 0$ $(8)$.

Similarly $\xi _{2}=\xi _{1}(p\xi _{1}^{2}+3y_{1}^{2})\equiv \xi _{1}(5\xi
_{1}^{2}+3y_{1}^{2})$ $(8)$. $(y_{1}^{2},\xi _{1}^{2})\equiv (1,1)$ $%
(8)\Rightarrow $ $\xi _{2}\equiv \xi _{2}(5+3)\equiv 0$ $(8)$. $%
(y_{1}^{2},\xi _{1}^{2})\equiv (0,4)$ or $(4,0)$ $(8)\Rightarrow \xi
_{2}\equiv \pm 2\cdot 4,$ $0\cdot 4$ or $4\cdot 0\equiv 0$ $(8)$. So in any
case $\xi _{2}\equiv 0$ $(8)$.

So $8\mid y_{2},\xi _{2}$ and thus, writing $y_{3}=\frac{y_{2}}{8},\xi _{3}=%
\frac{\xi _{2}}{8}\in 
%TCIMACRO{\U{2124} }%
%BeginExpansion
\mathbb{Z}
%EndExpansion
$, we get $(y_{3}^{2}-p\xi _{3}^{2})=\frac{-4^{3}}{8^{2}}=-1$. As in the
case where $p\equiv 1$ $(8)$, writing $(y_{3}\pm \sqrt{p}\xi _{3})^{2}=a\pm b%
\sqrt{p}$, $a,b\in 
%TCIMACRO{\U{2124} }%
%BeginExpansion
\mathbb{Z}
%EndExpansion
$, $(x,y)=(a,b)$ is a solution of (\ref{Pell's}). Summarizing, we get a
solution from%
\begin{equation*}
(a,b)=\left( 
\begin{array}{c}
\frac{1}{64}((g(1)^{3}+\frac{3f(1)^{2}g(1)}{p})^{2}+p(\frac{f(1)^{3}}{p^{2}}%
+3\frac{g(1)^{2}f(1)}{p})^{2})\text{ }, \\ 
\text{ }\frac{1}{32}(g(1)^{3}+3\frac{f(1)^{2}g(1)}{p})(\frac{f(1)^{3}}{p^{2}}%
+3\frac{g(1)^{2}f(1)}{p})%
\end{array}%
\right) 
\end{equation*}

\bigskip

\textbf{Case 2:} $p\equiv 3$ $(4)$.

Let $l=\frac{p-1}{2}$. $p\equiv 3$ $(4)\Rightarrow l$ is odd. We see that $%
f(x)=\frac{1}{2}(q_{1}(x)+q_{-1}(x))=\prod\limits_{\substack{ 1\leq k<p \\ %
\QOVERD( ) {k}{p}=1}}(x-\zeta ^{k})+\prod\limits_{\substack{ 1\leq k<p \\ %
\QOVERD( ) {k}{p}=-1}}(x-\zeta ^{k})$. $f$ is of degree $l$. We shall find a
relation amongst the coefficients of $f$ by comparing $f(\zeta )$ and $f(%
\overline{\zeta })=\overline{f(\zeta )}$ (since $f(x)\in 
%TCIMACRO{\U{2124} }%
%BeginExpansion
\mathbb{Z}
%EndExpansion
\lbrack x]$). Trivially $\QOVERD( ) {1}{p}=1$, so $\prod\limits_{\substack{ %
1\leq k<p \\ \QOVERD( ) {k}{p}=1}}(\zeta -\zeta ^{k})=0$ and so $f(\zeta
)=\prod\limits_{\substack{ 1\leq k<p \\ \QOVERD( ) {k}{p}=-1}}(\zeta -\zeta
^{k})$. Also note that $\QOVERD( ) {-1}{p}=(-1)^{\frac{p-1}{2}}=-1$, and so $%
\QOVERD( ) {k}{p}=-\QOVERD( ) {-k}{p}$ for all $1\leq k\leq p-1$. So $%
f(\zeta )=\prod\limits_{\substack{ 1\leq k<p \\ \QOVERD( ) {k}{p}=1}}(\zeta
-\zeta ^{-k})$. By the same line of reasoning, $f(\overline{\zeta })=f(\zeta
^{-1})=$ $\prod\limits_{\substack{ 1\leq k<p \\ \QOVERD( ) {k}{p}=1}}(\zeta
^{-1}-\zeta ^{k})$. So%
\begin{eqnarray*}
\frac{f(\zeta )}{f(\zeta ^{-1})} &=&\prod\limits_{\substack{ 1\leq k<p \\ %
\QOVERD( ) {k}{p}=1}}\frac{(\zeta -\zeta ^{-k})}{(\zeta ^{-1}-\zeta ^{k})}%
=(-1)^{l}\prod\limits_{\substack{ 1\leq k<p \\ \QOVERD( ) {k}{p}=1}}\zeta
^{1-k} \\
&&\text{since there are precisely }l\text{ quadratic residues modulo }p \\
&=&-\zeta ^{l}\prod\limits_{\substack{ 1\leq k<p \\ \QOVERD( ) {k}{p}=1}}%
\zeta ^{-k} \\
&=&-\zeta ^{l} \\
&&\text{since }\sum\limits_{\substack{ 1\leq k<p \\ \QOVERD( ) {k}{p}=1}}k=p%
\frac{p-1}{2}+0\text{ since the Legendre symbol is a } \\
&&\text{quadratic character modulo }p\text{ and since }\left( \frac{0}{p}%
\right) =0\text{.}
\end{eqnarray*}

So $f(\zeta )=-\zeta ^{l}f(\zeta ^{-1})$. So writing $%
f(x)=a_{l}x^{l}+a_{l-1}x^{l-1}+...+a_{1}x+a_{0}$, this yields $a_{l}\zeta
^{l}+a_{l-1}\zeta ^{l-1}+...+a_{1}\zeta +a_{0}=-a_{0}\zeta ^{l}-a_{1}\zeta
^{l-1}-...-a_{l-1}\zeta -a_{l}$, i.e. 
\begin{equation}
\sum\limits_{k=0}^{l}a_{k}\zeta ^{k}=\sum\limits_{k=0}^{l}(-a_{k})\zeta
^{l-k}  \label{equi1'}
\end{equation}

Now it is trivial to see that $a_{l}=2$ by the above formula for $f(x)$.
Also, $q_{1}(x)=2\prod\limits_{\substack{ 1\leq k<p \\ \QOVERD( ) {k}{p}=1}}%
(x-\zeta ^{k})$. The constant term of $q_{1}$ is $2(-1)^{l}\prod\limits
_{\substack{ 1\leq k<p \\ \QOVERD( ) {k}{p}=1}}\zeta
^{k}=-2\prod\limits_{1\leq k\leq l}\zeta ^{k^{2}}=-2\zeta ^{\frac{%
l(l+1)(2l+1)}{6}}=-2\zeta ^{p\frac{p^{2}-1}{24}}$. Now $3\mid p^{2}-1$ since 
$p\neq 3$ ($p\equiv 3$ $(4)$), and $p^{2}\equiv 1$ $(8)$ since $p$ is odd.
So $3\cdot 8=24\mid p^{2}-1$. So The constant term of $q_{1}$ is $-2\cdot
1=-2$. But $q_{1}(x)=f(x)+\sqrt{p^{\ast }}g(x)$ where $f(x),g(x)\in 
%TCIMACRO{\U{2124} }%
%BeginExpansion
\mathbb{Z}
%EndExpansion
\lbrack x]$. So we must have $a_{0}=-2$. Therefore $a_{l}=-a_{0}$. So (\ref%
{equi1'}) now yields $\sum\limits_{k=1}^{l-1}a_{k}\zeta
^{k}=\sum\limits_{k=1}^{l-1}(-a_{k})\zeta
^{l-k}=\sum\limits_{k=1}^{l-1}(-a_{l-k})\zeta ^{k}$ (after replacing $k$ by $%
l-k$), and $\{\zeta ,...,\zeta ^{l-1}\}$ is a $%
%TCIMACRO{\U{2124} }%
%BeginExpansion
\mathbb{Z}
%EndExpansion
$-linearly independent subset. So $a_{l-k}=-a_{l}$ for $1\leq k\leq l-1$,
and so by the above, $a_{l-k}=-a_{l}$ for all $0\leq k\leq l$. We can
therefore rewrite $f(x)$ as $%
2(x^{l}-1)+b_{1}x(x^{l-2}-1)+b_{2}x^{2}(x^{l-4}-1)+...+b_{\frac{l-1}{2}}x^{%
\frac{l-1}{2}}(x-1)=\sum\limits_{k=0}^{\frac{l-1}{2}}b_{k}x^{k}(x^{l-2k}-1)$%
, $b_{k}\in 
%TCIMACRO{\U{2124} }%
%BeginExpansion
\mathbb{Z}
%EndExpansion
$ for all $0\leq k\leq \frac{l-1}{2}$ (with $b_{0}=2$).

Replacing $x$ by $i=\sqrt{-1}$, we see that $x^{k}(x^{l-2k}-1)$ depends on
whether $p\equiv 3$ or $7$ $(8)$.

Let $p\equiv 3$ $(8)$. Then $l\equiv 1$ $(4)$ and simple calculation yields

$i^{k}(i^{l-2k}-1)=\left\{ 
\begin{array}{ll}
1-i & \text{if }k\equiv 1,2\text{ }(4) \\ 
-(1-i) & \text{if }k\equiv 0,3\text{ }(4)%
\end{array}%
\right. $

$p\equiv 7$ $(8)\Rightarrow l\equiv 3$ $(4)$, and the same type of
calculation yields

$i^{k}(i^{l-2k}-1)=\left\{ 
\begin{array}{ll}
1+i & \text{if }k\equiv 3,2\text{ }(4) \\ 
-(1+i) & \text{if }k\equiv 0,1\text{ }(4)%
\end{array}%
\right. $

Writing $i^{\ast }=\left\{ 
\begin{array}{ll}
-i & \text{if }p\equiv 3\text{ }(8) \\ 
+i & \text{if }p\equiv 7\text{ }(8)%
\end{array}%
\right. $, we see that $f(i)=\sum\limits_{k=0}^{\frac{l-1}{2}}\pm
b_{k}(1+i^{\ast })=y_{2}(1+i^{\ast })$ where $y_{2}\in 
%TCIMACRO{\U{2124} }%
%BeginExpansion
\mathbb{Z}
%EndExpansion
$.

Now, 
\begin{eqnarray*}
g(x) &=&\frac{1}{2\sqrt{p^{\ast }}}(q_{1}(x)-q_{-1}(x)) \\
&=&\frac{1}{\sqrt{p^{\ast }}}\left( \prod\limits_{\substack{ 1\leq k<p  \\ %
\QOVERD( ) {k}{p}=1}}(x-\zeta ^{k})-\prod\limits_{\substack{ 1\leq k<p  \\ %
\QOVERD( ) {k}{p}=-1}}(x-\zeta ^{k})\right)
\end{eqnarray*}%
And so%
\begin{eqnarray*}
g(\zeta ) &=&\boldsymbol{-}\frac{1}{\sqrt{p^{\ast }}}\left( \prod\limits 
_{\substack{ 1\leq k<p  \\ \QOVERD( ) {k}{p}=1}}(\zeta -\zeta ^{-k})\right)
\\
\text{and }g(\zeta ^{-1}) &=&\frac{1}{\sqrt{p^{\ast }}}\left( \prod\limits 
_{\substack{ 1\leq k<p  \\ \QOVERD( ) {k}{p}=1}}(\zeta ^{-1}-\zeta
^{k})\right)
\end{eqnarray*}%
A similar line of reasoning as for $f(x)$ gives us that $g(\zeta )=+\zeta
^{l}g(\zeta ^{-1})$. Following the same steps as for $f(x)$, we find that,
writing $g(x)$ as $\frac{1}{\sqrt{p^{\ast }}}\sum\limits_{k=0}^{l}a_{k}x^{k}$%
, we get $a_{l-k}=+a_{l}$ for all $0\leq k\leq l$ (with $a_{l}=a_{0}=0$ this
time). We can therefore similarly rewrite $g(x)$ as $\sum\limits_{k=0}^{%
\frac{l-1}{2}}b_{k}x^{k}(x^{l-2k}+1)$, $b_{k}\in 
%TCIMACRO{\U{2124} }%
%BeginExpansion
\mathbb{Z}
%EndExpansion
$ (remembering that $g(x)\in 
%TCIMACRO{\U{2124} }%
%BeginExpansion
\mathbb{Z}
%EndExpansion
\lbrack x]$ by \ref{MainPellLemma}). A similar argument shows that $%
g(i)=\sum\limits_{k=0}^{\frac{l-1}{2}}\pm b_{k}(1-i^{\ast })=\xi
_{2}(1-i^{\ast })$ where $\xi _{2}\in 
%TCIMACRO{\U{2124} }%
%BeginExpansion
\mathbb{Z}
%EndExpansion
$.

Now $l\equiv 3$ $(4)$, so $q_{1}(i)q_{-1}(i)=4m_{p}(i)=4(1+i+...+i^{l})=4%
\cdot ((1+i-1-i)+(1+i-1-i)+...+(1+i-1))=4i$

So $f(i)^{2}-p^{\ast }g(i)^{2}=f(i)^{2}+pg(i)^{2}=4i$, and so $%
y_{2}^{2}(1+i^{\ast })^{2}+p\xi _{2}(1-i^{\ast })^{2}=2y_{2}^{2}i^{\ast
}-2p\xi _{2}^{2}i^{\ast }=4i$ or, dividing by $2i^{\ast }=\pm 2i$,%
\begin{eqnarray*}
&&y_{2}^{2}-p\xi _{2}^{2}=\pm 2 \\
&\Rightarrow &(y_{2}+\sqrt{p}\xi _{2})^{2}(y_{2}-\sqrt{p}\xi _{2})^{2}=4
\end{eqnarray*}

Now $y_{2},\xi _{2}$ are odd, else $y_{2}^{2}-p\xi _{2}^{2}\equiv
y_{2}^{2}+\xi _{2}^{2}\equiv 0\not\equiv \pm 2$ $(4)$. So the coefficients
of $(y_{2}+\sqrt{p}\xi _{2})^{2}=(y_{2}^{2}+p\xi _{2}^{2})+2y_{2}\xi _{2}%
\sqrt{p}$ are even. We can thus write $a=\frac{(y_{2}^{2}+p\xi _{2}^{2})}{2}%
,b=y_{2}\xi _{2}\in 
%TCIMACRO{\U{2124} }%
%BeginExpansion
\mathbb{Z}
%EndExpansion
$ and get%
\begin{equation*}
a^{2}-pb^{2}=\frac{(y_{2}+\sqrt{p}\xi _{2})^{2}(y_{2}-\sqrt{p}\xi _{2})^{2}}{%
2\cdot 2}=\frac{4}{4}=1
\end{equation*}

This solves the equation, where%
\begin{eqnarray*}
(a,b) &=&\left( \frac{i^{\ast }}{4}(pg(i)^{2}-f(i)^{2})\text{ },\text{ }%
\frac{1}{2}g(i)f(i)\right) \text{ } \\
&&\text{where we can directly compute }f(i)\text{ and }g(i)
\end{eqnarray*}

\bigskip

To apply this method to the general case of Pell's Equation (where $d$ is
square-free but not necessarily prime), since $d$ is square-free, it can be
written as $d=\prod\limits_{k=1}^{r}p_{k}$ where the $p_{k}$'s are rational
primes. So it suffices to study the case where $d=pq$ for primes $p$ and $q$
and deduce the general case by induction. We will not describe said case in
depth here since this paper mainly focuses on prime cyclotomic fields, but
we remark that taking $%
%TCIMACRO{\U{211a} }%
%BeginExpansion
\mathbb{Q}
%EndExpansion
(\zeta _{pq})$, $m_{pq}(x)=m_{p}(x)m_{q}(x)\frac{(x^{pq}-1)/(x-1)}{%
((x^{p}-1)/(x-1))\cdot ((x^{q}-1)/(x-q))}=\frac{(x^{pq}-1)(x-1)}{%
(x^{p}-1)(x^{q}-1)}$ which can be shown to be irreducible by a similar
method as the simple proof for showing that $\sum\limits_{k=0}^{p-1}x^{k}$
is the minimal polynomial of $\zeta _{p}$ in $%
%TCIMACRO{\U{2124} }%
%BeginExpansion
\mathbb{Z}
%EndExpansion
\lbrack x]$. Following the same reasoning as in the case where $d=p$, we can
write $4m_{pq}(x)=f(x)^{2}\pm pqg(x)^{2}$ where $f(x),g(x)\in 
%TCIMACRO{\U{2124} }%
%BeginExpansion
\mathbb{Z}
%EndExpansion
$. The rest of the problem is solved in a similar fashion as well.

\bigskip

Using some interesting approximation methods and quadratic number fields,
Ireland \& Rosen \cite{InR} show that $x^{2}-dy^{2}=1$ has \emph{infinitely
many solutions} for any square-free integer $d$ (including $d=2$), and that
every solution has the form $\pm (x_{n},y_{n})$ where $x_{n}+\sqrt{d}%
y_{n}=(x_{1}+\sqrt{d}y_{1})^{n}$ for some solution $(x_{1},y_{1})$ and
\thinspace $n\in 
%TCIMACRO{\U{2124} }%
%BeginExpansion
\mathbb{Z}
%EndExpansion
$.

\bigskip 

\textbf{Acknowledgment }\bigskip \emph{Many thanks to Professor Dan Segal,
All-Souls College, Oxford, for his advice.}

\end{document}